\documentclass[11pt]{amsart}
\usepackage{relsize}
\usepackage{amstext}
\usepackage{amsfonts}
\usepackage{amssymb}
\usepackage{amsbsy}
\usepackage{latexsym}
\usepackage{xy}
\usepackage{tikz-cd}
\usepackage{hhline}
\xyoption{all}

\vfuzz2pt 
\hfuzz2pt 
\mathsurround=0pt \textwidth=15true cm \tolerance=1500%

\newcounter{num}[section] %

\newenvironment{theo}
{\refstepcounter{num}%
\bigskip\noindent{\bf Theorem~\arabic{section}.\arabic{num}. }\it}


\newenvironment{cor}
{\refstepcounter{num}%
\bigskip\noindent{\bf Corollary~\arabic{section}.\arabic{num}. }\it}

\newenvironment{lemma}
{\refstepcounter{num}%
\bigskip\noindent{\bf Lemma~\arabic{section}.\arabic{num}. }\it}


\newenvironment{eq}{\begin{equation}}{\end{equation}}

\renewcommand{\Ref}[1]{(\ref{#1})}

\newcommand{\si}{\sigma}
\newcommand{\al}{\alpha}
\newcommand{\be}{\beta}
\newcommand{\ga}{\gamma}
\newcommand{\la}{\lambda}
\newcommand{\de}{\delta}

\newcommand{\LA}{\langle}
\newcommand{\RA}{\rangle}
\newcommand{\ov}[1]{\overline{#1}}
\newcommand{\un}[1]{{\underline{#1}} }

\newcommand{\lin}{\mathop{\rm lin}}
\newcommand{\tr}{\mathop{\rm tr}}

\newcommand{\mdeg}{\mathop{\rm mdeg}}

\newcommand{\Char}{\mathop{\rm char}}





\newcommand{\algA}{\mathcal{A}}    



\newcommand{\FF}{{\mathbb{F}}}   
\newcommand{\NN}{{\mathbb{N}}}





\newcommand{\Q}{\mathcal{Q}}    


\newcommand{\Y}{\LA Y\RA}
\renewcommand{\L}{\LA L\RA}

\newcommand{\EL}{\LA \widetilde{L}\RA}

\newcommand{\PhiLin}[1]{\Xi_{#1}}  

\newcommand{\PhiELin}[1]{\widetilde{\Xi}_{#1}}  


\begin{document}
\title[Indecomposable orthogonal invariants]{Indecomposable orthogonal invariants of several matrices over a field of positive characteristic}

\thanks{The results from Section 3 were supported by RFBR 16-31-60111 (mol\_a\_dk) and the results from Section 4 were supported by CNPq 307712/2014-1}

\author{Artem Lopatin}
\address{Artem Lopatin\\ 
State University of Campinas, 651 Sergio Buarque de Holanda, 13083-859 Campinas, SP, Brazil}
\email{dr.artem.lopatin@gmail.com (Artem Lopatin)}
 

\begin{abstract} 
We consider the algebra of invariants of $d$-tuples of $n\times n$
matrices under the action of the orthogonal group by simultaneous conjugation over an infinite field of  characteristic $p$ different from two.  It is well-known that this algebra is generated by the coefficients of the characteristic polynomial of all products of generic and transpose generic $n\times n$ matrices. We establish that in case $0<p\leq n$ the maximal degree of indecomposable invariants tends to infinity as $d$ tends to infinity. In other words, there does not exist a constant $C(n)$ such that it only depends on $n$  and the considered algebra of invariants is generated by elements of degree less than $C(n)$ for any $d$.  This result is well-known in case of the action of the general linear group. On the other hand, for the rest of $p$ the given phenomenon  does not hold. We investigate the same problem for the cases of symmetric and skew-symmetric matrices.  

\noindent{\bf Keywords: } invariant theory, matrix invariants, classical linear groups,  generators, positive characteristic.

\noindent{\bf 2010 MSC: } 16R30; 15B10; 13A50.
\end{abstract}

\maketitle

\section{Introduction}\label{section_intro}
In this paper all vector spaces and algebras are over an infinite field $\FF$ of characteristic $p=\Char{\FF}\geq 0$.  By an algebra we always mean an associative algebra. Whenever we consider the orthogonal group $O(n)$, that is, the group of all $n \times n$ matrices $A$ over $\FF$ such that $AA^T=E$, we assume that $p\neq2$.

Assume that $n>1$ and $d\geq 2$. To define the algebra of {\it matrix $O(n)$-invariants} $R^{O(n)}$ we consider the polynomial algebra 
$$R=R_{n,d}=\FF[x_{ij}(k)\,|\,1\leq i,j\leq n,\, 1\leq k\leq d],$$ 
and denote by 
$$X_k=\left(\begin{array}{ccc}
x_{11}(k) & \cdots & x_{1n}(k)\\
\vdots & & \vdots \\
x_{n1}(k) & \cdots & x_{nn}(k)\\
\end{array}
\right)$$
an $n\times n$ {\it generic} matrix, where  $1\leq k\leq d$. Given an $n\times n$ matrix $A$, we write $\si_t(A)$ for the $t^{\rm th}$ coefficient of the characteristic polynomial of $A$. Then $R^{O(n)}$ is the subalgebra of $R$
generated by $\si_t(A)$, where $1\leq t\leq n$ and $A$ ranges over all monomials in the matrices $X_1,\ldots,X_d$, $X_1^T,\ldots,X_d^T$.  Considering only products of generic matrices we obtain the algebra $R^{GL(n)}$ of {\it matrix $GL(n)$-invariants}.  Denote by $X_k^{+}$ and $X_k^{-}$ the symmetric and skew-symmetric generic matrices, respectively. Namely, 
$$
(X_k^{+})_{ij}=
\left\{ 
\begin{array}{rc}
x_{ij}(k),& \text{ if } i\geq j\\
x_{ji}(k),& \text{ if } i< j\\
\end{array}
\right.\qquad\text{and}\qquad
(X_k^{-})_{ij}=
\left\{ 
\begin{array}{rc}
x_{ij}(k),& \text{ if } i> j\\
0 ,& \text{ if } i= j\\
-x_{ji}(k),& \text{ if } i< j\\
\end{array}
\right.,
$$
where $(B)_{ij}$ stands for the $(i,j)^{\rm th}$ entry of a matrix $B$. The algebra $R_{+}^{O(n)}$ ($R_{-}^{O(n)}$, respectively) of {\it $O(n)$-invariants of symmetric} ({\it skew-symmetric}, respectively) {\it matrices} is generated  by $\si_t(A)$, where $A$ ranges over monomials in $X_1^{+},\ldots,X_d^{+}$  ($X_1^{-},\ldots,X_d^{-}$, respectively). Note that in each of the considered  cases we can assume that $A$ is {\it primitive}, i.e., is not equal to a power of a shorter monomial. By the Hilbert--Nagata Theorem on invariants, these algebras of invariants are finitely generated, but the mentioned generating systems are infinite. 

The above given generators of $R^{GL(n)}$, $R^{O(n)}$ were described  in~\cite{Sibirskii_1968}, \cite{Procesi76}, \cite{Donkin92a}, \cite{Zubkov99} and the generators of $R_{+}^{O(n)}$ and $R_{-}^{O(n)}$ were given in~\cite{Lopatin_so_inv} (see also~\cite{ZubkovI}). In case $p=0$ relations  between generators were independently computed in~\cite{Razmyslov74} and~\cite{Procesi76}.  For an arbitrary $p$ relations for $GL(n)$ and $O(n)$-invariants were established in~\cite{Zubkov96},~\cite{Lopatin_Orel},~\cite{Lopatin_Ofree}. 

For $f\in R$ denote by $\deg{f}$ its {\it degree} and by $\mdeg{f}$ its {\it multidegree}, i.e.,
$\mdeg{f}=(t_1,\ldots,t_d)$, where $t_k$ is the total degree of the polynomial $f$ in $x_{ij}(k)$, $1\leq i,j\leq n$, and $\deg{f}=t_1+\cdots+t_d$. Since $\deg{\si_t(Y_1\cdots Y_s)}=ts$, where $Y_k$ is a generic or a transpose generic matrix, the algebra $R^{O(n)}$ has $\NN$-grading by degrees and $\NN^d$-grading by multidegrees, where $\NN$ stands for non-negative integers. 

Consider an $\NN$-graded unital algebra $\algA$ with the component of degree zero equal to $\FF$. Denote by $\algA^{+}$ the subalgebra generated by homogeneous elements of positive degree. A set $\{a_i\} \subseteq \algA$ is a minimal (by inclusion) homogeneous system of generators
(m.h.s.g.) of $\algA$ as a unital algebra~if and only if the $a_i$'s are $\NN$-homogeneous and $\{\ov{a_i}\}\cup\{1\}$ is a basis of the vector space $\ov{\algA}={\algA}/{(\algA^{+})^2}$. If we consider $a\in\algA$ as an element of $\ov{\algA}$, then we usually omit the bar and write $a\in\ov{\algA}$ instead of $\ov{a}$. An element $a\in \algA$ is called {\it
decomposable} if $a=0$ in $\ov{\algA}$. In other words, a decomposable
element is equal to a polynomial in elements of strictly lower degree. Therefore  the highest degree $D(\algA)$ of indecomposable elements of  $\algA$  is equal to the least upper bound for the degrees of elements of a m.h.s.g.~for $\algA$.

It is well-known that for $0<p\leq n$ we have $D(R^{GL(n)})\to\infty$ as $d\to\infty$ (see~\cite{DKZ_2002}). On the other hand, in case $p=0$ or $p>n$ there exists an upper bound on $D(R^{GL(n)})$, which does not depend on $d$. Namely, it follows from the well-known Nagata--Higman Theorem (see~\cite{Nagata_1953} and~\cite{Higman_1956}), which at first was proved by Dubnov and Ivanov~\cite{Dubnov_1943} in 1943, that  $D(R^{GL(n)})\leq 2^{n}$. Since 
$$D(R_{+}^{O(n)})\leq D(R^{GL(n)}),\; D(R_{-}^{O(n)})\leq D(R^{GL(n)}),\; \text{ and } D(R_{n,d}^{O(n)})\leq D(R_{n,2d}^{GL(n)}),$$
we can see that there also exists an upper bound on $D(\algA)$ which does not depend on $d$, where $\algA$ is $R^{O(n)}$, $R_{+}^{O(n)}$ or $R_{-}^{O(n)}$. In this paper we consider the case of $0<p\leq n$ for the orthogonal invariants as well as for the orthogonal symmetric and skew-symmetric invariants: 

\begin{theo}\label{theo1}
\begin{enumerate}
\item[(a)] Assume that $2<p\leq n$. Then the invariant $\tr(X_1\cdots X_d)$ is indecomposable in $R^{O(n)}$ and $\tr(X_1^{+}\cdots X_d^{+})$ is indecomposable in  $R_{+}^{O(n)}$. 

\item[(b)] Assume that  $2<p\leq \frac{n-1}{2}$ and $d$ is even. Then the invariant $\tr(X_1^{-}\cdots X_d^{-})$ is indecomposable in $R_{-}^{O(n)}$. 
\end{enumerate}
\end{theo}
\medskip

Part~(a) of Theorem~\ref{theo1} is proven in Section~\ref{section3} and part~(b) is proven in Section~\ref{section4}.  Theorem~\ref{theo1} immediately implies the following corollary:

\begin{cor}\label{cor1}
\begin{enumerate}
\item[(a)]  $D(R^{O(n)})\geq d$ and $D(R_{+}^{O(n)})\geq d$ in case $2<p\leq n$;

\item[(b)]  $D(R_{-}^{O(n)})\geq d-1$ in case  $2<p\leq \frac{n-1}{2}$.
\end{enumerate}
\end{cor}
\medskip


M.h.s.g.-s for matrix $GL(2)$ and $O(2)$-invariants are known (see~\cite{Sibirskii_1968},~\cite{Procesi_1984},~\cite{DKZ_2002}). For example, a m.h.s.g.~for $R^{GL(2)}$ is the following set:
$$\tr(X_{i_1}\cdots X_{i_k}), \det(X_i), \text{ where } 1\leq i_1<\cdots<i_k\leq  d, \; 1\leq i\leq d,$$
where $k=1,2,3$ in case $p\neq 2$ and $k>0$ in case $p=2$. A m.h.s.g.~for matrix $GL(3)$-invariants was established in~\cite{Lopatin_Comm1} and~\cite{Lopatin_Comm2}. In case $p=0$ and $d=2$ a m.h.s.g.~for $R^{GL(n)}$ was established in~\cite{Drensky_Sadikova_4x4} for $4\times 4$ matrices (see also~\cite{Djokovic_GLinv_2007}) and in~\cite{Djokovic_GLinv_2007} for $5\times 5$ matrices.  

In~\cite{Lopatin_O3} the following estimations were given:
\begin{enumerate} 
\item[$\bullet$] If $p=3$, then $2d+4\leq D(R^{O(3)})\leq 2d+7$.

\item[$\bullet$] If $p \neq 2,3$, then $D(R^{O(3)})=6$.
\end{enumerate}
A m.h.s.g.~for $R^{O(3)}_{-}$ in the case of arbitrary $d$ was given in~\cite{Lopatin_Oskew}. In particular, it was proven that $D(R^{O(3)}_{-})=3$ for $d>2$ and $p\neq2$. Thus the statement of part~(b) of Corollary~\ref{cor1} does not hold in general when $\frac{n-1}{2}<p\leq n$. We formulate the following conjecture:

\begin{conj}\label{conj}
In case $\frac{n-1}{2}<p\leq n$ we have $D(R_{-}^{O(n)})\leq C(n)$ for some  $C(n)$ that does not depend on $d$.
\end{conj}
\medskip

A  m.h.s.g.~for $R^{O(4)}_{-}$ in case  $d=2$ was recently established in~\cite{Lopatin_mgs_skewO4}. In case $p=0$ m.h.s.g.-s~for $O(n)$- and $SO(n)$-invariants of one matrix were given in~\cite{Djokovic_2005} and~\cite{Djokovic_Osmall} in case $n\leq 5$ and m.h.s.g.-s~for symplectic invariants  were established in~\cite{Djokovic_Sp} in case $(n,d)$ is $(4,2)$ or $(6,1)$.

\section{Notation and auxiliary results}\label{section_notations}

For a vector $\un{k}=(k_1,\ldots,k_t)$ we denote $\#\un{k}=t$. In case $\un{k}\in\NN^t$ we set $|\un{k}|=k_1+\cdots +k_t$. Given a symbol $\de\in\{1,T\}$ and a matrix $A$, we write $A^{\de}$ for $A$ ($A^T$, respectively) in case $\de=1$ ($\de=T$, respectively). Similar notation we apply in case when instead of a matrix $A$ we consider an element $a$ from some semigroup with involution $T$ (as an example, see the definition of $\mu$ in the beginning of Section~\ref{section4}). In this paper we use the following notions: 
\begin{enumerate}
\item[$\bullet$] The semigroup $\Y$ (without unity) is freely generated by {\it letters}  $x_1,\ldots,x_d,x_1^T,\ldots,x_d^T$ and the monoid $\Y^{\#}=\Y\sqcup\{1\}$.

\item[$\bullet$] The involution ${}^T:\Y\to\Y$ is defined by $(x_i)^{T}=x_i^T$ and $(x_i^T)^T=x_i$ for all $i$ and $(a_1\cdots a_r)^T=a_r^T\cdots a_1^T$ for any  $a_1,\ldots,a_r\in\Y$.

\item[$\bullet$] We say that $a,b\in\Y$ are {\it cyclicly equivalent} and write $a\stackrel{c}{\sim} b$
if $a=a_1a_2$ and $b=a_2a_1$ for some $a_1,a_2\in\Y^{\#}$. If $a\stackrel{c}{\sim} b$ or $a\stackrel{c}{\sim} b^T$, then we say that $a$ and $b$ are {\it equivalent} and write $a\sim b$. 

\item[$\bullet$] $\L$ is the set of multilinear elements of $\Y$, i.e., of monomials $x_{\si(1)}^{\de_1}\cdots x_{\si(d)}^{\de_d}$, where $\si\in S_{d}$ is a permutation and $\de_i\in\{1,T\}$ for all $i$.  Let $\EL\subset\L$ be the subset of all elements that start with the letter $x_1$. In other words, $\EL$ is a set of distinct representatives of the ${\sim}$-equivalence classes of elements of $\L$.

\item[$\bullet$] $\tr\L$ is the $\FF$-span of ``symbolic''{} elements $\tr(a)$ for $a$ from $\Y$  and $\tr\EL$ is the $\FF$-span of ``symbolic''{}  elements $\tr(a)$ for $a$ from $\EL$. 

\item[$\bullet$] $\lin(\ov{R^{O(n)}})$ the $\FF$-span of $\tr(X_{\si(1)}^{\de_1}\cdots X_{\si(d)}^{\de_d})$ in $\ov{R^{O(n)}}$, where $\si\in S_d$ and $\de_i\in\{1,T\}$ for all $i$. 

\item[$\bullet$] $\PhiLin{n,d}:\tr\L\to \lin(\ov{R^{O(n)}})$ is the linear surjective map defined by $$\tr(x_{\si(1)}^{\de_1}\cdots x_{\si(d)}^{\de_d})\to 
\tr(X_{\si(1)}^{\de_1}\cdots X_{\si(d)}^{\de_d}\!),$$
where $\si, \de_i$ are the same as above. Similarly we define $\PhiELin{n,d}:\tr\EL\to \lin(\ov{R^{O(n)}})$.

\item[$\bullet$] $I_{n,d}$ is the kernel of $\PhiLin{n,d}$, i.e., the space of relations between indecomposable multilinear elements of $R^{O(n)}$, and $\widetilde{I}_{n,d}$ is the kernel of $\PhiELin{n,d}$.
\end{enumerate} 

A vector $(\un{u};\un{v};\un{w})$ with $\un{u}=(u_1,\ldots,u_t)$, $\un{v}=(v_1,\ldots,v_r)$, $\un{w}=(w_1,\ldots,w_r)$, $u_1,\ldots,w_r\in\Y$ is called a {\it multilinear triple} of $\L$ if $u_1\cdots w_r\in\L$.  

In order to define the element $\si_{\rm lin}(\un{u};\un{v};\un{w})$ of $\tr\L$ for a multilinear triple $(\un{u};\un{v};\un{w})$ of $\L$ we consider the quiver (i.e., the oriented graph) $\Q=\Q_{t,r}$:
\begin{center}
\begin{tikzcd}[cells={nodes={draw=black, circle}}, column sep=10em]
1
\arrow[out=-130, in=130, loop, distance=2.5cm,"\mathlarger{\al_1,\ldots,\al_t}"]
\arrow[bend left]{r}{\mathlarger{\be_1,\be_1^T,\ldots,\be_r,\be_r^T}}
& 2 
\arrow[out=50, in=-50, loop, distance=2.5cm,"\mathlarger{\al_1^T,\ldots,\al_t^T}"]
\arrow[bend left]{l}{\mathlarger{\ga_1,\ga_1^T,\ldots,\ga_r,\ga_r^T}} \\
\end{tikzcd}
\end{center}
%
where there are $2r$ arrows from vertex $2$ to vertex $1$ as well as from $1$ to $2$, and there are $t$ loops at each of the two vertices.  For an arrow $a$ denote by $a'$ its head and by $a''$ its tail (i.e., $\be'_i=\ga''_i=1$ and so on). A sequence of arrows $a=a_1\cdots a_s$ of $\Q$ is a {\it path} of $\Q$ if $a_i''=a_{i+1}'$ for all $1\leq i< s$. The head of the path $a$ is $a'=a_1'$ and the tail is $a''=a_s''$. Note that $a^T=a_s^T\cdots a_1^T$ is also a path in $\Q$, where by definition $(\al_i)^T=\al_i^T$, $(\be_j)^T=\be_j^T$, $(\ga_j)^T=\ga_j^T$ and $(\al_i^T)^T=\al_i$, $(\be_j^T)^T=\be_j$, $(\ga_j^T)^T=\ga_j$ for all $i,j$. A path $a$ is {\it closed} if $a'=a''$.
For closed paths $a,b$  we write $a\stackrel{c}{\sim} b$ if $a=a_1a_2$ and $b=a_2a_1$ for some paths $a_1,a_2$, where a path $a_2$ can be empty. If $a\stackrel{c}{\sim} b$ or  $a\stackrel{c}{\sim} b^T$, then we say that $a$ and $b$ are equivalent and write $a\sim b$.

Given a path $a$ in $\Q$ and an arrow $b$ of $\Q$, denote by $\deg_b(a)$ the number of letters $b$ in $a$. Introduce the multidegree of a path $a$ in $\Q$ as follows:
$$\mdeg(a)=(\un{k};\un{l};\un{m})$$
for $\un{k}=(k_1,\ldots,k_t)$, $\un{l}=(l_1,\ldots,l_r)$, $\un{m}=(m_1,\ldots,m_r)$ with 
$$k_i=\deg_{\al_i}(a)+\deg_{\al_i^T}(a),\;\;
l_j=\deg_{\be_j}(a)+\deg_{\be_j^T}(a),\;\; 
m_j=\deg_{\ga_j}(a)+\deg_{\ga_j^T}(a)$$
for all $i,j$. Note that the path $a$ is closed if and only if $\sum_j l_j=\sum_j m_j$. 

A path $a$ is called {\it multilinear} if $\mdeg(a)=(1^t;1^r;1^r)$, where $1^t=(1,\ldots,1)$ ($t$ times); in particular, $a$ is closed.
Denote by $\Omega=\Omega_{t,r}$ the set of all multilinear paths $a=a_1 \cdots a_{t+2r}$ in $\Q$ (where $a_1,\ldots,a_{t+2r}$ are arrows in $\Q$) with $$a_1=
\left\{
\begin{array}{cc}
\al_1,& \text{if } r=0 \\
\be_1,& \text{if } r>0  \\
\end{array}
\right..$$ 

For a path $a$ we set
$$\xi_a=t+\sum_j\deg_{\be_j}(a) +\sum_j\deg_{\ga_j}(a).$$  

Finally, for a multilinear triple  $(\un{u};\un{v};\un{w})$  of $\L$ we define the next element of $\tr\L$:  
$$\si_{\rm lin}(\un{u};\un{v};\un{w})=\sum\limits_{a\in\Omega} (-1)^{\xi_a} \tr(a(\un{u};\un{v};\un{w})),$$
where $a(\un{u};\un{v};\un{w})$ stands for the result of substitutions $\al_i^{\de}\to u_i^{\de}$, $\be_j^{\de}\to v_j^{\de}$, $\ga_j^{\de} \to w_j^{\de}$  ($1\leq i\leq t$, $1\leq j\leq r$, $\de\in\{1,T\}$) in $a$. Note that here $a(\un{u};\un{v};\un{w})\in \L$. 

Consider a linear map $\pi:\tr\L \to \tr\EL$ defined by $\tr(a)\to \tr(b)$, where  $b\in\EL$ satisfies $b\sim a$. Applying $\pi$, we can consider any element of $\tr\L$ as an element of $\tr\EL$.  As an example, see the formulation of Lemma~\ref{L1}. Obviously, the following diagram is commutative.
$$ 
\begin{picture}(0,80)
\put(0,85){%
\put(-75,-33){$\tr\L$}%
\put(50,-33){$\tr\EL$}%
\put(-44,-30){\vector(1,0){87}}%
\put(50,-40){\vector(-3,-2){35}}%
\put(-50,-40){\vector(3,-2){35}}%
\put(-23,-78){$\lin(\ov{R^{O(n)}})$}%
\put(-3,-25){$\scriptstyle\pi$}%
\put(-35,-48){$\scriptstyle\PhiLin{n,d}$}%
\put(17,-48){$\scriptstyle\PhiELin{n,d}$}%
}%
\end{picture}
$$%

\begin{lemma}\label{L1}
\begin{enumerate}
\item[(a)] Given $t,r\geq0$, $\Omega_{t,r}$ is a set of representatives of all $\sim$-equivalence classes of multilinear paths in $\Q_{t,r}$.

\item[(b)] Given a multilinear triple $(\un{u};\un{v};\un{w})$ of $\L$, the following equality holds in $\tr\EL$: 
$$\si_{\rm lin}(\un{u};\un{v};\un{w})=\sum\limits (-1)^{\xi_a} \tr(a(\un{u};\un{v};\un{w})),$$
where the sum ranges over all $\sim$-equivalence classes in $\Q$.
\end{enumerate}
\end{lemma}
\begin{proof}Since elements of $\Omega_{t,r}$ are multilinear, part~(a) is obvious. Let us remark that for closed paths $a,b$ in $\Q_{t,r}$ with $a\sim b$ we have
\begin{eq}\label{eq1}
(-1)^{\xi_{a}}=(-1)^{\xi_{b}}.
\end{eq}%
\noindent{}To prove~\Ref{eq1} it is enough to notice that it holds in case $b=a^T$. Hence, part~(a) and equality~\Ref{eq1} imply part~(b). 
\end{proof}

\begin{lemma}\label{L2}
\begin{enumerate}
\item[(a)] The space $I_{n,d}\subset \tr\L$ is the $\FF$-span of 
\begin{enumerate}
\item[(A)] $\tr(ab)-\tr(ba)$ for $ab$ from $\L$;

\item[(B)] $\tr(a)-\tr(a^T)$ for $a$ from $\L$;

\item[(C)] $\si_{\rm lin}(\un{u};\un{v};\un{w})$ for a multilinear triple $(\un{u};\un{v};\un{w})$ of $\L$, if $t+2r=n+1$ for $t=\#\un{u}$ and $r=\#\un{v}=\#\un{w}$.
\end{enumerate}

\item[(b)] The space $\widetilde{I}_{n,d}\subset \tr\EL$ is the $\FF$-span of images of (C) with respect to $\pi$. 
\end{enumerate}
\end{lemma}
\begin{proof}
To prove part~(b) we apply Theorem 6.3 and Remarks 6.4, 6.5 from~\cite{Lopatin_JPAA2013} together with part (b) of Lemma~\ref{L1}.

Note that the kernel of $\pi$ is generated by elements from (A) and (B). This fact together with part~(b) imply part~(a).
\end{proof}

Given a sequence $\un{a}=(a_1,\ldots,a_k)$, where $a_1,\ldots,a_k\in\Y$, and $\si\in S_k$, we write  $\un{a}^T$ for $(a_1^T,\ldots,a_k^T)$ and $\un{a}_{\si}$ for $(a_{\si(1)},\ldots,a_{\si(k)})$.

\begin{lemma}\label{L3}
For a multilinear triple $(\un{u};\un{v};\un{w})$ of $\L$ the following elements of $\tr\L$ belong to the $\FF$-span of elements of types (A) and (B) from  Lemma~\ref{L2}:
\begin{enumerate}
\item[(a)] $\si_{\rm lin}(\un{u};\un{v};\un{w}) - \si_{\rm lin}(\un{u}^T;\un{w};\un{v})$;  

\item[(b)] $\si_{\rm lin}(\un{u};\un{v};\un{w}) - \si_{\rm lin}(\un{u};\un{v}^T;\un{w}^T)$.  
\end{enumerate}
\end{lemma}
\begin{proof}
Given a multilinear path $a$ in $\Q_{t,r}$, denote by $b$ the result of substitutions $\be_j\to \be_j^T$, $\ga_j\to \ga_j^T$ ($1\leq j\leq r$) in $a$. Then  $(-1)^{\xi_{a}}=(-1)^{\xi_{b}}$, $b$ is the multilinear path in $\Q_{t,r}$ and
\begin{eq}\label{eqNew1}
\tr(a(\un{u};\un{v};\un{w})) = \tr(b(\un{u}^T;\un{w};\un{v})) = \tr(b(\un{u};\un{v}^T;\un{w}^T)).
\end{eq}%
As an example, for $t=2$, $r=1$ and $a=\al_1 \be_1 \al_2^T \ga_1$ we have $b=\al_1 \be_1^T \al_2^T \ga_1^T$ and equalities~\Ref{eqNew1} are equivalent to $\tr(u_1 v_1 u_2^T w_1) = \tr(u_1^T w_1^T u_2 v_1^T) = \tr(u_1 v_1 u_2^T w_1)$. Lemma~\ref{L1} together with equalities~\Ref{eqNew1} conclude the proof of the lemma. \end{proof}

\begin{lemma}\label{Lnew1}
For a multilinear triple $(\un{u};\un{v};\un{w})$ of $\L$ and permutations $\si\in S_t$, $\nu, \eta\in S_r$ the following element of $\tr\L$ belongs to the $\FF$-span of elements of types (A) and (B) from  Lemma~\ref{L2}:
$$\si_{\rm lin}(\un{u};\un{v};\un{w}) - \si_{\rm lin}(\un{u}_{\si};\un{v}_{\nu};\un{w}_{\eta}).$$ 
\end{lemma}
\begin{proof} 
For any $a\in\Omega_{t,r}$ we have $a(\un{u}_{\si};\un{v}_{\nu};\un{w}_{\eta}) = a_{\si,\nu,\eta}(\un{u};\un{v};\un{w})$, where $a_{\si,\nu,\eta}$ is the result of substitutions  $\al_i^{\de}\to \al_{\si(i)}^{\de}$, $\be_j^{\de}\to \be_{\nu(j)}^{\de}$, $\ga_j^{\de} \to \ga_{\eta(j)}^{\de}$  ($1\leq i\leq t$, $1\leq j\leq r$, $\de\in\{1,T\}$) in $a$. By the definition of $\si_{\rm lin}$, 
$$\si_{\rm lin}(\un{u}_{\si};\un{v}_{\nu};\un{w}_{\eta}) = \sum_{a\in \Omega_{t,r}} (-1)^{\xi_a}\tr(a_{\si,\nu,\eta}(\un{u};\un{v};\un{w})).$$
Obviously, $a_{\si,\nu,\eta}$ is a multilinear path in $\Q_{t,r}$ and $\xi_a = \xi_{a_{\si,\nu,\eta}}$. Note that there exists $c_a\in\Omega_{t,r}$ with
$a_{\si,\nu,\eta} \sim c_a$. In particular, 
$$\tr(a_{\si,\nu,\eta}(\un{u};\un{v};\un{w}))-\tr(c_a(\un{u};\un{v};\un{w}))$$
belongs to the $\FF$-span of elements of types (A) and (B) from  Lemma~\ref{L2}. 
Moreover, for any $a, b\in \Omega_{t,r}$ we have
$$a\neq b \; \Longleftrightarrow\; a\not\sim b \; \Longleftrightarrow\; a_{\si,\nu,\eta}\not\sim b_{\si,\nu,\eta}.$$
Then $c_a$ ranges over $\Omega_{t,r}$ whenever  $a$ ranges over $\Omega_{t,r}$. The lemma is proven.
\end{proof}

\section{Multilinear matrix $O(n)$-invariants}\label{section3}

In this section we prove part~(a) of Theorem~\ref{theo1}.

\begin{lemma}\label{lemma1}
Assume that $0<p\leq n$. Then the sum of coefficients of an element $\sum_i \al_i \tr(a_i)$ of $I_{n,d}$ is zero, i.e., $\sum_i \al_i=0$.
\end{lemma}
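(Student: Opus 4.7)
The plan is to apply Theorem~\ref{theo2}, which expresses $I_n$ as the $\FF$-span of three families of generators, and verify the vanishing of the coefficient sum on each family. The functional $\Sigma : \tr\Y \to \FF$ sending $\sum_i\alpha_i\tr(a_i)\mapsto \sum_i\alpha_i$ is $\FF$-linear, so it suffices to check $\Sigma(g)=0$ for each listed generator. For types~(a) and~(b) each generator is a difference $\tr(\cdot)-\tr(\cdot)$, so $\Sigma=1-1=0$ without further work. This reduces the lemma to proving $\sum_{a\in\Omega}(-1)^{\xi_a}=0$ in $\FF$ for every multilinear triple $(\un{u},\un{v},\un{w})$ satisfying $t+2r>n$, where $t:=|\un{u}|$ and $r:=|\un{v}|=|\un{w}|$.

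I would split this into two cases. When $r\geq 1$, I would construct a sign-reversing fixed-point-free involution $\phi$ on $\Omega$ by swapping, inside each path $a$, the unique letter from $\{v_1,v_1^T\}$ that occurs in $a$ with its counterpart. Both $v_1$ and $v_1^T$ are oriented as arrows from vertex~$2$ to vertex~$1$ in $\Q$, so the swap leaves the sequence of vertices traversed by $a$ intact, whence $\phi(a)$ is again a closed path; the normalization $a_1=u_1$ is unaffected because $\phi$ touches a $v$-letter. Since $\deg_{v_1}(\phi(a))=1-\deg_{v_1}(a)$ and every other degree appearing in $\xi_a$ is unchanged, the parity of $\xi_a$ flips under $\phi$, so the contributions of $a$ and $\phi(a)$ cancel and the sum is $0$ already in $\ZZ$.

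When $r=0$, the hypothesis $t+2r>n$ forces $t>n$, and $\Q$ has no arrows connecting vertices~$1$ and~$2$, only $t$ loops at each. Since $a_1=u_1$ is a loop at vertex~$1$, every path in $\Omega$ stays at vertex~$1$ throughout and uses precisely the $t$ letters $u_1,\ldots,u_t$, never their transposes. Hence $|\Omega|=(t-1)!$ and $\xi_a=t$ for all $a\in\Omega$, giving $\sum_a(-1)^{\xi_a}=(-1)^t(t-1)!$. Because $0<p\leq n\leq t-1$, the factor $(t-1)!$ is divisible by $p$ and the sum vanishes in $\FF$; this is the unique point at which the hypothesis $p\leq n$ is used. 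The main obstacle is spotting the right involution when $r\geq 1$: it works precisely because of the convention in $\Q$ that $v_j$ and $v_j^T$ carry the same orientation, so the swap does not alter the underlying directed walk.
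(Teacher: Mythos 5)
Your proof is correct and follows essentially the same route as the paper: the reduction via Theorem~\ref{theo2}, the sign-reversing swap of $v_1$ and $v_1^T$ when $r\geq 1$ (the paper phrases this as a bijection $\psi$ from $\{a:\deg_{v_1}(a)=1\}$ onto its complement, which is your involution), and the count $\pm(t-1)!\equiv 0$ when $r=0$. No discrepancies to report.
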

\begin{proof}
It is enough to verify the statement of the lemma for elements of the types (A), (B) and (C) from Lemma~\ref{L2}. Clearly, the lemma holds for elements of the types (A) and (B). 

Consider a multilinear triple $(\un{u};\un{v};\un{w})$ of $\L$ with $|\un{u}|=t$, $|\un{v}|=r$ and $t+2r=n+1$.  Then the sum of coefficients of $\si_{\rm lin}(\un{u};\un{v};\un{w})$ is 
$$\la=\sum_{a\in\Omega} (-1)^{\xi_a}.$$

Assume that $r\geq1$. Denote by $\Omega_0$ the set of all $a$ from $\Omega$ with $\deg_{\ga_1}(a)=1$. Consider a map $\psi:\Omega_0\to\Omega$ that substitutes the arrow $\ga_1$ in $a\in\Omega_0$ by $\ga_1^T$. By the definition, $\Omega$ is the  disjoint union of $\Omega_0$ and $\psi(\Omega_0)$. Moreover, 
$$(-1)^{\xi_a}+(-1)^{\xi_{\psi(a)}}=0.$$ 
Thus $\la=0$.

Assume that $r=0$. Then $t=n+1$ and $\Omega$ consists of all monomials $\al_1 \al_{\si(2)} \cdots \al_{\si(t)}$ for a permutation $\si\in S_t$ with $\si(1)=1$. Hence $\la = (-1)^t (t-1)! =0$, since $0<p\leq n$.
\end{proof}

\begin{proof} Now we can prove part (a) of Theorem~\ref{theo1}. Since the sum of coefficients of the element $\tr(x_1\cdots x_d)$ of $\LA L\RA$ is one, then Lemma~\ref{lemma1} implies that the invariant $\tr(X_1\cdots X_d)$ is indecomposable in $R^{O(n)}$. 

Assume that $\tr(X_1^{+}\cdots X_d^{+})$ is decomposable in $R_{+}^{O(n)}$. Then $F=\tr((X_1+X_1^T)\cdots (X_d+X_d^T))$ is decomposable in $R^{O(n)}$.  Note that
$$F=\sum \tr(X_1^{\de_1}\cdots X_d^{\de_d}),$$
where the sum ranges over all $\de_1,\ldots,\de_d$ from $\{1,T\}$. Hence the element 
$$f=\sum \tr(x_1^{\de_1}\cdots x_d^{\de_d}) \in \tr\L$$
belongs to $I_{n,d}$, where the sum ranges over all $\de_1,\ldots,\de_d$ from $\{1,T\}$. The sum of coefficients of $f$ is $2^d$; a contradiction to Lemma~\ref{lemma1}.
\end{proof}

\section{Multilinear skew-symmetric matrix $O(n)$-invariants}\label{section4}

In this section we prove part~(b) of Theorem~\ref{theo1}. Consider the linear map $\mu: \tr\L\to \FF$ such that for $\de_1,\ldots,\de_d$ from $\{1,T\}$ and $\si\in S_d$ the image of 
$\tr(x_{\si(1)}^{\de_1}\ldots x_{\si(d)}^{\de_d})$ is $1$ in case $\de_1=\ldots=\de_d$ and the image is zero otherwise. 

\begin{lemma}\label{lemma2}
Assume that $0<p\leq \frac{n-1}{2}$. Then $\mu(f)=0$ for all $f\in I_{n,d}$.
\end{lemma}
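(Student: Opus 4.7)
The plan is to reduce to Theorem~\ref{theo2} and verify that $\ga$ vanishes on each of the three types of elements spanning $I_n$. Types (a) and (b) are immediate: for (a), both $\tr(ab)$ and $\tr(ba)$ carry the same letters with the same transpose pattern, so $\ga$ assigns them the same value; for (b), the involution ${}^T$ reverses order and flips every exponent, so it preserves the property "all exponents coincide" which is the only feature $\ga$ detects. Hence the difference lands in $\ker \ga$ in each case.

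The substantive step is type (c). Fix a multilinear triple $(\un{u},\un{v},\un{w})$ with $t+2r>n$, where $t=|\un{u}|$ and $r=|\un{v}|=|\un{w}|$. Since $a_1=u_1$ is untransposed, the only $a\in\Omega$ with $\ga(\tr(a))\neq 0$ are those whose letters are \emph{all} untransposed. For such $a$, no $u_i^T$ loop is available at vertex~$2$, so each visit to vertex~$2$ via some $w_k$ must be followed immediately by some $v_j$ back to vertex~$1$. Therefore $a$ is determined by: an ordering of the $t-1$ remaining loops $u_2,\ldots,u_t$, an ordering of the $r$ letters $w_k$, an ordering of the $r$ letters $v_j$, and an interleaving at vertex~$1$ of the $t-1$ loop-tokens with the $r$ round-trip tokens $w_k v_j$. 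The count is
\[
(t-1)!\cdot (r!)^2 \cdot \binom{t-1+r}{r} \;=\; r!\,(t+r-1)!,
\]
and every such path satisfies $\xi_a = t+r+r = t+2r$, so carries the common sign $(-1)^t$. Hence $\ga(\si_{\rm lin}(\un{u},\un{v},\un{w})) = (-1)^t\, r!\,(t+r-1)!$.

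It remains to show this expression vanishes in $\FF$. From $t+2r>n\geq 2p$: if $r\geq p$ then $p\mid r!$; otherwise $r\leq p-1$, and then $t+r > n-r \geq 2p-(p-1)=p+1$, giving $t+r-1\geq p$ and thus $p\mid (t+r-1)!$. In either case $\ga(\si_{\rm lin})=0$.

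The main obstacle is setting up the enumeration of paths correctly: one must see that the constraint "all letters untransposed" reduces the free movement in $\Q$ to the rigid $w_k v_j$ round-trip pattern, and then count the interleavings without double-counting. The degenerate case $t=0$ (no initial $u_1$ is available, so one fixes the starting letter differently) needs a brief separate check, but it is immediate since $r > n/2 \geq p$ already forces $p\mid r!$.
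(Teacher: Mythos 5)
Your proposal is correct and follows the same route as the paper: reduce to the three types of relations in Theorem~\ref{theo2}, compute $\ga(\si_{\rm lin}(\un{u},\un{v},\un{w}))=(-1)^t(t+r-1)!\,r!$ by enumerating the all-untransposed closed paths, and kill this in characteristic $p$ using $t+2r>n\geq 2p$. In fact you supply the two details the paper leaves implicit (the path count $\binom{t-1+r}{r}(t-1)!(r!)^2=(t+r-1)!\,r!$ and the case split $r\geq p$ versus $t+r-1\geq p$), and both check out.
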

\begin{proof}
It is enough to verify the statement of the lemma for elements of the types (A), (B) and (C) from Lemma~\ref{L2}. Clearly the lemma holds for elements of the types (A) and (B). 

Consider a multilinear triple $(\un{u};\un{v};\un{w})$ of $\L$ with $|\un{u}|=t$, $|\un{v}|=r$ and $t+2r=n+1$, and denote $f=\si_{\rm lin}(\un{u};\un{v};\un{w})$. 
Denote by $\Omega_1$ the set of all $a\in\Omega$ with $\mu(\tr(a(\un{u};\un{v};\un{w})))=1$. We have the following two possibilities. 

\medskip
\noindent{\bf(1)} If there exists an element $c$ of the set of all $u_i$, $v_j$, $w_j$ such that $c$ contains both $x_k$ and $x_q^T$ for some $k,q$, then obviously $\mu(\tr(a(\un{u};\un{v};\un{w})))=0$ for any multilinear path $a$ in $\Q$; therefore, $\mu(f)=0$. 

\medskip
\noindent{\bf(2)} For every $i,j$ there are $\hat{u}_i, \hat{v}_j, \hat{w}_j$  from $\Y$ such that 
\begin{enumerate}
\item[$\bullet$] $u_i=\hat{u}_i^{\de_i}$, $v_i=\hat{v}_j^{\theta_j}$, $w_j=\hat{w}_j^{\zeta_j}$ for some $\de_i$, $\theta_j$, $\zeta_j$ from  $\{1,T\}$; 

\item[$\bullet$] $\hat{u}_i, \hat{v}_j, \hat{w}_j$ are products of $x_1,\ldots,x_d$.
\end{enumerate}

We start with the partial case of $\de_i=\theta_j=\zeta_j=1$ for all $i,j$ and then consider the general case.

\medskip
\noindent{\bf(2.1)} Consider the partial case when $u_i$, $v_j$, $w_j$ are products of $x_1,\ldots,x_d$ for all $i,j$.

Let $r=0$. Then $t>n$ and $\Omega$ consists of all monomials $\al_1 \al_{\si(2)} \cdots \al_{\si(t)}$ for permutations $\si\in S_t$ with $\si(1)=1$. Hence $\mu(f)= (-1)^t (t-1)! =0$.

Let $r>0$. Then a path $a$ in $\Q$ belongs to $\Omega_1$ if and only if
$$a=\be_1 \ga_{\tau(1)} a_1\; \be_{\si(2)} \ga_{\tau(2)} a_2 \cdots \be_{\si(r)} \ga_{\tau(r)} a_r,$$
where $\si,\tau\in S_r$ with $\si(1)=1$ and $a_1,\ldots,a_r$ are products (possibly empty) of $\al_1,\ldots,\al_t$ such that $a_1\cdots a_r=\al_{\eta(1)}\cdots \al_{\eta(t)}$ for some permutation $\eta\in S_t$.
Thus 
$$|\Omega_1| = (r-1)! r! \cdot t!\; b_{t,r},$$
where $b_{t,r}$ stands for the number of vectors of non-negative integers $(i_1,\ldots,i_r)$ with $i_1+\cdots+i_r=t$. Since 
$b_{t,r}=\left(\begin{array}{c}
t+r-1\\
r-1\\
\end{array}
\right)$, then  
$$\mu(f)=(-1)^t\; |\Omega_1|=(-1)^t (t+r-1)!\, r!.$$
Note that $\frac{t}{2}+r=\frac{n+1}{2}> p$. If $t=0$ or $t=1$, then $r\geq p$ and $\mu(f)=0$. On the other hand, if $t>1$, then $t-1\geq \frac{t}{2}$; therefore, $\mu(f)=0$.

\medskip
\noindent{\bf(2.2)} Now we consider the general case. By Lemma~\ref{Lnew1}, without loss of generality, we assume that 
\begin{enumerate}
\item[$\bullet$]  $\de_1=\cdots=\de_{k}=1$ and $\de_{k+1}=\cdots=\de_t=T$ for some $0\leq k\leq t$;

\item[$\bullet$]  $\theta_1=\cdots=\theta_{l}=1$ and $\theta_{l+1}=\cdots=\theta_r=T$ for some $0\leq l\leq r$; 

\item[$\bullet$]  $\zeta_1=\cdots=\zeta_{m}=1$ and $\zeta_{m+1}=\cdots=\zeta_r=T$ for some $0\leq m\leq r$,
\end{enumerate}
where by Lemma~\ref{L3} we can assume that $k>0$ in case $t>0$ and $l>0$ in case $r>0$. Moreover, we assume that $k<t$ or $l<r$ or $m<r$, since the case of $k=t$ and $l=m=r$ has already been considered in part (2.1) of the proof. 

Let $r=0$. Then $t=n+1$, $0<k<t$, and $\Omega_1$ is empty; therefore, $\mu(f)=0$.

Let $r>0$. For every $1\leq j\leq r$ denote $b_j=\be_j^{\theta_j}$ and  $c_j=\ga_j^{\zeta_j}$. Since $b_1=\be_1$, then a path $a$ in $\Q$ belongs to $\Omega_1$ if and only if
$$a=\be_1 x_1 c_{\tau(1)} y_1\cdot b_{\si(2)} x_2 c_{\tau(2)} y_2 \cdots b_{\si(r)} x_r c_{\tau(r)} y_r,$$
where $\si,\tau\in S_r$ with $\si(1)=1$ and 
\begin{enumerate}
\item[$\bullet$] $x_1,\ldots,x_r$ are products (possibly empty) of $\al_{k+1}^T,\ldots,\al_{t}^T$ such that $x_1\cdots x_r=\al_{\eta(k+1)}^T\cdots \al_{\eta(t)}^T$ for some permutation $\eta$ of the set $\{k+1,\ldots,t\}$;

\item[$\bullet$]  $y_1,\ldots,y_r$ are products (possibly empty) of $\al_1,\ldots,\al_{k}$ such that $y_1\cdots y_r=\al_{\eta(1)}\cdots \al_{\eta(k)}$ for some permutation $\eta\in S_{k}$.
\end{enumerate}
Thus
$$|\Omega_1| = (r-1)! r! \cdot (t-k)!\; b_{t-k,r}\cdot k!\; b_{k,r} = 
r\; (t-k+r-1)!\; (k+r-1)!.$$
By the definition of $\xi_a$,
$$\mu(f)=(-1)^{t+l+m}\,|\Omega_1|.$$
Note that the condition $\frac{t}{2}\geq k$ implies $t-k+r-1\geq \frac{t}{2} + r -1 =\frac{n}{2}-\frac{1}{2}\geq p$ and $\mu(f)=0$. On the other hand, the condition $k\geq\frac{t}{2}$ implies $k+r-1\geq \frac{t}{2} + r -1 \geq p$ and $\mu(f)=0$. Thus, $\mu(f)=0$.
\end{proof}

\begin{proof} Now we can prove part (b) of Theorem~\ref{theo1}. Assume that $\tr(X_1^{-}\cdots X_d^{-})$ is decomposable in $R_{-}^{O(n)}$. Then $F=\tr((X_1-X_1^T)\cdots (X_d-X_d^T))$ is decomposable in $R^{O(n)}$. Note that
$$F=\sum (-1)^{\rho_{\un{\de}}}\tr(X_1^{\de_1}\cdots X_d^{\de_d}),$$
where the sum ranges over all $\un{\de}=(\de_1,\ldots,\de_d)$ with  $\de_1,\ldots,\de_d\in\{1,T\}$ and ${\rho_{\un{\de}}}$ stands for the number of symbols ``T''{} in $\un{\de}$. Hence 
$$f=\sum (-1)^{\rho_{\un{\de}}}\tr(x_1^{\de_1}\cdots x_d^{\de_d})\in\tr\L$$
belongs to $I_{n,d}$, where the sum ranges over all $\de_1,\ldots,\de_d$ from $\{1,T\}$. Since $\mu(f)=1+(-1)^d\neq 0$ for even $d$, we obtain a contradiction to Lemma~\ref{lemma2}.
\end{proof}

\end{document}